\documentclass[a4paper]{article}

\usepackage[14pt]{extsizes}
\usepackage[left=25mm, top=25mm, right=15mm, bottom=25mm, nohead, footskip=10mm]{geometry}
\usepackage[english, ukrainian]{babel}
\usepackage{amsthm, amssymb, amsmath}
\usepackage{enumitem}
\usepackage{color}
\usepackage[square,comma,numbers]{natbib}
\usepackage{pifont}

\newtheorem{theorem}{Теорема}
\newtheorem{lemma}{Лема}
\newtheorem{corollary}{Наслідок}
\newtheorem{remark}{Зауваження}

\makeatletter
\newcommand{\leqnomode}{\tagsleft@true\let\veqno\@@leqno}
\makeatother

\newcommand\R{\mathbb R}
\newcommand\E{\mathbb E}
\newcommand\N{\mathbb N}
\newcommand\B{\mathcal B}
\newcommand\F{\mathcal F}
\newcommand\FF{\mathbb F}
\newcommand\I{\mathbb I}
\newcommand\PP{\mathcal P}
\newcommand\ltag[1]{\leqnomode\tag{#1}}
\renewcommand\d{\mathrm d}
\renewcommand\P{\mathbb P}

\setenumerate[1]{label={(\arabic*)}}

\title{\vspace{-2cm}\large{Про асимптотику розв'язків стохастичних диференціальних рівнянь зі стрибками \\ (On Asymptotics of Solutions of Stochastic Differential Equations with Jumps) \\ \small{УДК 519.21}}}
\author{В.~К. Юськович (V.~K. Yuskovych) \\ \small{вул.~Машинобудівна, 41, м.~Київ; viktyusk@gmail.com; НТУУ <<КПІ ім. І.~Сікорського>>}}
\date{}

\begin{document}
	\setlength{\belowdisplayskip}{5pt} \setlength{\belowdisplayshortskip}{2pt}
	\setlength{\abovedisplayskip}{5pt} \setlength{\abovedisplayshortskip}{2pt}
	
	\maketitle
	
	\begin{abstract}
		Розглянемо одновимірне стохастичне диференціальне рівняння зі стрибками
		$$\d X(t) = a(X(t))\d t + \sum_{k = 1}^m b_k(X(t-))\d Z_k(t),$$
		де $Z_k, \ k \in \{1, 2, ..., m\}$ -- незалежні центровані процеси Леві зі скінченними другими моментами. Ми доводимо, що якщо коефіцієнт $a(x)$ має певну степеневу асимптотику при $x \to \infty$, а коефіцієнти $b_k, \ k \in \{1, 2, ..., m\},$ задовольняють певну умову на зростання, то розв'язок $X(t)$ м.~н. має таку саму асимптотику при $t \to \infty$, що і розв'язок звичайного диференціального рівняння $\d x(t) = a(x(t))\d t$.
	\end{abstract}

	\selectlanguage{english}
	\begin{abstract}
		Consider a one-dimensional stochastic differential equation with jumps
		$$\d X(t) = a(X(t))\d t + \sum_{k = 1}^m b_k(X(t-))\d Z_k(t),$$
		where $Z_k, \ k \in \{1, 2, ..., m\}$ are independent centered L\'evy processes with finite second moments. We prove that if coefficient $a(x)$ has certain power asymptotics as $x \to \infty$ and coefficients $b_k, \ k \in \{1, 2, ..., m\},$ satisfy certain growth condition then a solution $X(t)$ has the same asymptotics as a solution of $\d x(t) = a(x(t))\d t$ as $t \to \infty$ a.s.
	\end{abstract}
	\selectlanguage{ukrainian}

	\section{Вступ}
	
	Як правило, розглядають два типи поведінки розв'язків стохастичних диференціальних рівнянь при $t \to \infty$: прямування до нескінченності та рекурентність. У даній статті ми будемо припускати, що розв'язок стохастичного диференціального рівняння прямує до нескінченності, та досліджувати його точну асимптотику.
	
	Уперше це питання розглядали Гіхман та Скороход \cite{gikhman1982stochastic} для одновимірного стохастичного диференціального рівняння вигляду
	\begin{equation}\label{sde}
		\d X(t) = a(X(t))\d t + b(X(t))\d W(t),
	\end{equation}
	де $W$ -- одновимірний вінерівський процес. Зокрема, вони знайшли достатні умови того, що $X(t) \to +\infty, \ t \to \infty$, та $X(t) \sim x(t), \ t \to \infty$, м.~н., де $x$ -- розв'язок звичайного диференціального рівняння
	\begin{equation}\label{ode}
		\d x(t) = a(x(t))\d t.
	\end{equation}
	Пізніше ця задача досліджувалася у роботі \cite{keller1984asymptotic}. У роботі \cite{buldygin2018asymptotic} розглядаються деякі типи неавтономних стохастичних диференціальних рівнянь. У статтях \cite{pavlyukevich2020generalized}, \cite{pilipenko2018perturbations} розглядаються стохастичні диференціальні рівняння з негауссівським шумом. 
	
	У книзі \cite{friedman2012stochastic} досліджується питання прямування до нескiнченностi та рекурентностi розв’язку системи лiнiйних стохастичних диференцiальних рiвнянь, а також поведiнки полярного кута розв’язку двовимiрного стохастичного диференціального рівняння. У статті \cite{samoilenko2012asymptotic} дослiджується асимптотична поведiнка багатовимiрних стохастичних диференцiальних рiвнянь шляхом порiвняння з лiнiйними звичайними диференцiальними рiвняннями.  У статті \cite{yuskovych2023asymptotic} розглядалося багатовимірне стохастичне диференціальне рівняння вигляду (\ref{sde}) та досліджувалася м.~н. поведінка розв'язку при $t \to \infty$: умови прямування модуля розв'язку до нескінченності, стабілізації кута $X(t)/|X(t)|$ та асимптотика модуля розв'язку. У статті \cite{pilipenko2018selection} аналогічне питання вивчалося для випадку адитивного шуму Леві.
	
	Питання про асимптотичну поведінку стохастичних диференціальних рівнянь з мультиплікативним шумом Леві не досліджене. У даній статті ми розглянемо стохастичне диференціальне рівняння зі стрибками вигляду
	$$\d X(t) = a(X(t))\d t + \sum_{k = 1}^m b_k(X(t-))\d Z_k(t),$$
	де $Z_k, \ k \in \{1, 2, ..., m\},$ -- незалежні центровані процеси Леві зі скінченним другим моментом.	Ми доведемо, що якщо коефіцієнт $a(t)$ має певну степеневу асимптотику при $t \to \infty$, а коефіцієнти $b_k, \ k \in \{1, 2, ..., m\},$ задовольняють певну умову на зростання, то розв'язок $X(t)$ м.~н. має таку саму асимптотику при $t \to \infty$, що і розв'язок звичайного диференціального рівняння (\ref{ode}).
	
	Основна частина статті має наступну структуру. У розділі 2 ми доводимо деякі леми про асимптотичну поведінку стохастичних інтегралів за вінерівським процесом та за компенсованою пуассонівською мірою. У розділі 3 ми доводимо два основні результати про асимптотичну поведінку розв'язків стохастичних диференціальних рівнянь зі стрибками: теорему \ref{theorem_asymptotics_constant}, в якій коефіцієнт зносу еквівалентний до деякої додатної сталої, та теорему \ref{theorem_asymptotics_power}, у якій коефіцієнт зносу еквівалентний до деякої додатної степеневої функції; в обох теоремах накладається деяка умова про швидкість зростання характеристик шуму. У розділ 4 ми винесли доведення деяких лем, необхідних для доведення теорем у розділі 3.
	
	\section{Асимптотика стохастичних інтегралів}
	
	У цьому параграфі ми отримаємо деякі допоміжні результати щодо асимптотики стохастичних інтегралів зі змінною верхньою межею $t$ при $t \to \infty$.
	
	Нехай $(\Omega, \F, \P)$ -- імовірнісний простір з потоком $\FF = (\F_t)_{t \geq 0}$, $W = W(t)$ -- $\FF$-вінерівський процес, $N = N(\d t, \d u)$ -- $\FF$-пуассонівська випадкова міра на\footnote{$\R_+$ позначає множину невід'ємних дійсних чисел.} $\R_+ \times \R$, незалежна від $W$, з характеристичною мірою $\d t \cdot \nu(\d u)$, де міра $\nu$ така, що $\int_{\R} u^2 \nu(\d u) < \infty$, $\tilde N = \tilde N(\d t, \d u) := N(\d t, \d u) - \d t \cdot \nu(\d u)$.
	
	\begin{lemma}\label{martingale_asymptotics}
		Нехай $M = M(t)$ -- квадратично інтегровний мартингал. Якщо $\E M^2(t) = O\left(t^\gamma\right), \ t \to \infty,$ для деякого $\gamma < 2$, то м.~н. $\frac{M(t)}{t} \to 0, \ t \to \infty$.
	\end{lemma}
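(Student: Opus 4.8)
The plan is to run a standard dyadic-blocking argument built on Doob's $L^2$ maximal inequality together with the Borel--Cantelli lemma. First I would fix a geometric grid $t_n = 2^n$ and observe that, since $M$ is a c\`adl\`ag square-integrable martingale, Doob's inequality gives $\E \sup_{s \le t_{n+1}} M^2(s) \le 4\, \E M^2(t_{n+1})$. Using the hypothesis $\E M^2(t) = O(t^\gamma)$ with $\gamma < 2$, this bounds the expected running maximum over $[0, 2^{n+1}]$ by a constant times $2^{n\gamma}$.

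Next, for a fixed $\varepsilon > 0$ I would consider the events $B_n = \{ \sup_{s \le 2^{n+1}} |M(s)| > \varepsilon 2^n \}$. By Chebyshev's inequality and the bound above, $\P(B_n) \le C \varepsilon^{-2} 2^{(\gamma - 2)n}$, and since $\gamma - 2 < 0$ the series $\sum_n \P(B_n)$ converges. The Borel--Cantelli lemma then yields that, almost surely, only finitely many $B_n$ occur, so there is a (random) index $N$ with $\sup_{s \le 2^{n+1}} |M(s)| \le \varepsilon 2^n$ for all $n \ge N$.

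The grid is chosen precisely so that the normalization by $t$ can be controlled uniformly across each block: for any $t$ with $2^n \le t < 2^{n+1}$ and $n \ge N$ one has $|M(t)|/t \le \varepsilon 2^n / 2^n = \varepsilon$, whence $\limsup_{t \to \infty} |M(t)|/t \le \varepsilon$ almost surely. Finally I would intersect these full-measure events over $\varepsilon = 1/j$, $j \in \N$; on the resulting still-full-measure event $\limsup_{t\to\infty}|M(t)|/t = 0$, which gives the claim.

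There is no serious obstacle here: the argument is routine once the geometric grid is in place. The only points demanding a little care are the measurability of the suprema and the applicability of Doob's inequality, which require $M$ to be c\`adl\`ag (guaranteed in the setting of the paper), and the order-of-quantifiers issue with $\varepsilon$, resolved by the countable intersection over $\varepsilon = 1/j$. A coarser grid $t_n = \rho^n$ for any $\rho > 1$ would work equally well and could be used to make the constants cleaner.
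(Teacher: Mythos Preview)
Your proposal is correct and follows essentially the same route as the paper: a dyadic grid $t_n=2^n$, Doob's maximal inequality to bound $\P\{\sup_{s\le 2^{n+1}}|M(s)|\ge \varepsilon 2^n\}$ by $C\varepsilon^{-2}2^{(\gamma-2)n}$, summability since $\gamma<2$, and then letting $\varepsilon\downarrow 0$. The only cosmetic differences are that the paper uses the weak-type form of Doob's inequality directly (rather than the $L^2$ maximal inequality plus Chebyshev) and bounds $\P\{\limsup_{t\to\infty}|M(t)/t|\ge\varepsilon\}$ by the tail of the series instead of invoking Borel--Cantelli explicitly.
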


	\begin{proof}
		З умови випливає, що існує $T \geq 0$ таке, що\footnote{Тут і надалі $C > 0$ -- універсальна константа, що може змінюватися від рядка до рядка.} $\E M^2(t) \leq Ct^\gamma, \ t \geq T$. Нехай $\varepsilon > 0$, $k \in \N$ таке, що $2^{k + 1} \geq T$. Оцінимо ймовірність:
		$$\P\left\{\sup_{2^k \leq t \leq 2^{k + 1}} \left|\frac{M(t)}t\right| \geq \varepsilon\right\} \leq \P\left\{\sup_{2^k \leq t \leq 2^{k + 1}} \frac{|M(t)|}{2^k} \geq \varepsilon\right\} \leq \P\left\{\sup_{t \leq 2^{k + 1}} |M(t)| \geq \varepsilon2^k\right\}$$
		\begin{equation*}\ltag{за нерівністю Дуба}
			\leq \frac{\E M^2(2^{k + 1})}{\left(\varepsilon 2^k\right)^2} \leq \frac{C\left(2^{k + 1}\right)^\gamma}{\varepsilon^2 2^{2k}} = \frac{C2^\gamma}\varepsilon \left(2^{\gamma - 2}\right)^k.
		\end{equation*}
		Для $n \in \N$ таких, що $2^{n + 1} \geq T$,
		$$\P\left\{\limsup_{t \to \infty}\left|\frac{M(t)}t\right| \geq \varepsilon\right\} \leq \P\left\{\sup_{t \geq 2^n} \left|\frac{M(t)}t\right| \geq \varepsilon\right\} \leq$$
		$$\leq \sum_{k = n}^\infty \P\left\{\sup_{2^k \leq t \leq 2^{k + 1}} \left|\frac{M(t)}t\right| \geq \varepsilon\right\} \leq \frac{C2^\gamma}\varepsilon \sum_{k = n}^\infty \left(2^{\gamma - 2}\right)^k.$$
		Останній ряд збігається до 0 при $n \to \infty$ (оскільки $\gamma - 2 < 0$), тому ймовірність на початку ланцюга нерівностей дорівнює 0. Оскільки $\varepsilon > 0$ довільне,
		$$\P\left\{\limsup_{t \to \infty}\left|\frac{M(t)}t\right| > 0\right\} = 0 \Longrightarrow \P\left\{\limsup_{t \to \infty}\left|\frac{M(t)}t\right| = 0\right\} = 1$$
		$$\Longrightarrow \P\left\{\lim_{t \to \infty}\left|\frac{M(t)}t\right| = 0\right\} = 1 \Longrightarrow \P\left\{\lim_{t \to \infty}\frac{M(t)}t = 0\right\} = 1,$$
		що й треба було довести.
	\end{proof}
	
	\begin{corollary}\label{ito_integral_asymptotics}
		Нехай прогресивно вимірний випадковий процес $b = b(t)$ такий, що $\E b^2(t) \leq C\left(1 + t^{2\beta}\right), \ t \geq 0,$ для деякого $0 \leq \beta < \frac12$. Тоді м.~н.
		$$\frac1t\int_0^t b(s) \d W(s) \to 0, \ t \to \infty.$$
	\end{corollary}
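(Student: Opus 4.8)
The plan is to reduce the statement directly to Lemma~\ref{martingale_asymptotics} by taking $M(t) := \int_0^t b(s)\,\d W(s)$ and checking that this process satisfies the hypotheses of that lemma with an exponent $\gamma < 2$. The whole corollary should then follow in three short steps.

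First I would verify that $M$ is a genuine square-integrable martingale rather than merely a local martingale. Since $b$ is progressively measurable and, by the assumed bound,
$$\int_0^t \E b^2(s)\,\d s \leq \int_0^t C\left(1 + s^{2\beta}\right)\d s < \infty$$
for every finite $t$, the integrand lies in the class for which the It\^o integral is a square-integrable $\FF$-martingale, and the It\^o isometry applies. This gives the explicit second moment
$$\E M^2(t) = \int_0^t \E b^2(s)\,\d s \leq C\left(t + \frac{t^{2\beta + 1}}{2\beta + 1}\right).$$

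Next I would read off the growth exponent. Because $0 \leq \beta < \frac12$, we have $1 \leq 2\beta + 1 < 2$, so for $t \geq 1$ the dominant term is $t^{2\beta + 1}$ and $\E M^2(t) = O\left(t^{2\beta + 1}\right)$ as $t \to \infty$. Setting $\gamma := 2\beta + 1$, the condition $\gamma < 2$ is precisely equivalent to the hypothesis $\beta < \frac12$. Applying Lemma~\ref{martingale_asymptotics} with this $\gamma$ then yields $\frac{M(t)}{t} \to 0$ a.s., which is the claim.

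I do not expect a serious obstacle here: this is essentially a bookkeeping corollary, and the only point requiring a little care is the justification that $M$ is a true square-integrable martingale so that the It\^o isometry may be used to pass the expectation inside the integral. The constraint $\beta < \frac12$ is not an artifact of the method but is exactly the threshold at which the second-moment growth rate $2\beta + 1$ stays below the critical value $2$ demanded by the lemma; at $\beta = \frac12$ the argument would break down, which is why the strict inequality is imposed.
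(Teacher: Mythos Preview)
Your proposal is correct and follows essentially the same route as the paper: define $M(t)=\int_0^t b(s)\,\d W(s)$, use the It\^o isometry together with Fubini to obtain $\E M^2(t)\leq\int_0^t C(1+s^{2\beta})\,\d s = O(t^{2\beta+1})$, and apply Lemma~\ref{martingale_asymptotics} with $\gamma=2\beta+1<2$. The only difference is that you spell out explicitly why $M$ is a genuine square-integrable martingale, which the paper leaves implicit.
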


	\begin{proof}
		Покладемо $M(t) = \int_0^t b(s) \d W(s)$. Тоді
		\begin{equation*}\ltag{за ізометрією Іто}
			\E M^2(t) = \E \int_0^t b^2(s) \d s
		\end{equation*}
		\begin{equation*}\ltag{за теоремою Фубіні}
			= \int_0^t \E b^2(s) \d s \leq \int_0^t C(1 + s^{2\beta}) \d s = O(t^{2\beta + 1}), \ t \to \infty.
		\end{equation*}
		Застосовуючи лему \ref{martingale_asymptotics}, отримуємо результат наслідку.
	\end{proof}
	
	Позначимо через $\PP$ сигма-алгебру, породжену випадковими полями вигляду $c(t, u) = \zeta_0\I_{t = 0, u \in U_0} + \sum_{k = 1}^{n} \zeta_k\I_{t \in (t_{k - 1}, t_k], u \in U_k},$ де $n \in \N$, $\zeta_0$ є $\F_0$-вимірною випадковою величиною, $\zeta_k$ є $\F_{t_{k - 1}}$-вимірною випадковою величиною, $k \in \{1, 2, ..., n\}$, $U_k \in \B(\R)$, $k \in \{0, 1, 2, ..., n\}$, $0 = t_0 < t_1 < ... < t_n = \infty$.
	
	\begin{corollary}\label{jump_integral_asymptotics}
		Нехай $\PP$-вимірне випадкове поле $c = c(t, u)$ таке, що 
		$$\E \int_{\R} c^2(t, u) \nu(\d u) \leq C\left(1 + t^{2\beta}\right), \ t \geq 0,$$
		для деякого $0 \leq \beta < \frac12$. Тоді м.~н.
		$$\frac1t\int_0^t\int_{\R}c(s, u)\tilde N(\d s, \d u) \to 0, \ t \to \infty.$$
	\end{corollary}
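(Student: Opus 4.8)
The plan is to mirror the proof of Corollary~\ref{ito_integral_asymptotics} almost verbatim, replacing the Itô isometry for the Wiener integral with the analogous isometry for the stochastic integral against the compensated Poisson measure $\tilde N$, and then to invoke Lemma~\ref{martingale_asymptotics}.

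First I would set $M(t) := \int_0^t\int_{\R}c(s, u)\,\tilde N(\d s, \d u)$. Since $c$ is $\PP$-measurable (hence predictable) and the growth hypothesis makes $c$ square-integrable against $\d s\,\nu(\d u)$ on every bounded time interval, $M$ is a well-defined square-integrable $\FF$-martingale. The key step is the second-moment computation: by the isometry for integrals with respect to $\tilde N$,
$$\E M^2(t) = \E\int_0^t\int_{\R}c^2(s, u)\,\nu(\d u)\,\d s,$$
and then Fubini's theorem together with the hypothesis gives
$$\E M^2(t) = \int_0^t\E\int_{\R}c^2(s, u)\,\nu(\d u)\,\d s \leq \int_0^t C\left(1 + s^{2\beta}\right)\d s = O\left(t^{2\beta + 1}\right), \ t \to \infty.$$

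Finally, since $0 \leq \beta < \frac12$, the exponent $\gamma := 2\beta + 1$ satisfies $\gamma < 2$, so Lemma~\ref{martingale_asymptotics} applies directly and yields $\frac{M(t)}t \to 0$ a.s.\ as $t \to \infty$, which is the claim. The only point requiring any care is the isometry itself: one must use the version valid for the compensated measure $\tilde N$, in which the second moment equals exactly the $\nu$-integral of $c^2$ integrated in time, with the cross terms vanishing by orthogonality of the martingale increments over disjoint time–space regions. This is the direct jump-process analogue of the Itô isometry used for $W$ in Corollary~\ref{ito_integral_asymptotics}, and with it in hand the remaining argument is identical to the Wiener case; I do not anticipate any genuine obstacle.
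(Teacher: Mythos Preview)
Your proof is correct and follows essentially the same route as the paper: define $M(t)=\int_0^t\int_\R c(s,u)\,\tilde N(\d s,\d u)$, apply the It\^o isometry for $\tilde N$ and Fubini to obtain $\E M^2(t)=O(t^{2\beta+1})$, and conclude via Lemma~\ref{martingale_asymptotics}. The only difference is that you add a brief justification of square-integrability and of the isometry, which the paper simply invokes.
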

	\begin{proof}
		Покладемо $M(t) = \int_0^t\int_{\R}c(s, u)\tilde N(\d s, \d u)$. Тоді
		\begin{equation*}\ltag{за ізометрією Іто}
			\E M^2(t) = \E \int_0^t\int_{\R} c^2(s, u) \nu(\d u) \d s
		\end{equation*}
		\begin{equation*}\ltag{за теоремою Фубіні}
			= \int_0^t \E\int_{\R} c^2(s, u) \nu(\d u) \d s
		\end{equation*}
		$$\leq \int_0^t C(1 + s^{2\beta}) \d s = O(t^{2\beta + 1}), \ t \to \infty.$$
		Застосовуючи лему \ref{martingale_asymptotics}, отримуємо результат наслідку.
	\end{proof}

	
	\section{Асимптотика розв'язків стохастичних рівнянь}
	
	Нехай $W_k$ -- вінерівський процес, $k \in \{1, 2, ..., m\}$, $\tilde N_k$ -- компенсована пуассонівська міра з компенсатором $\d t \cdot \nu_k(\d u)$, де міра $\nu_k$ така, що $\int_\R u^2\nu_k(\d u) < \infty$, $k \in \{1, 2, ..., l\}$, причому $W_1, W_2, ..., W_m, \tilde N_1, \tilde N_2, ..., \tilde N_l$ незалежні.
	
	Наступна теорема встановлює еквівалентність розв'язків стохастичних та звичайних диференціальних рівнянь у випадку, коли коефіцієнт зносу має додатну границю при $t \to \infty$, а характеристики шуму зростають не дуже швидко. Ця теорема є важливим результатом, який використовується далі (див. теорему \ref{theorem_asymptotics_power}) при встановленні степеневого типу зростання для розв'язків стохастичних диференціальних рівнянь, коефіцієнти яких мають степеневе зростання.
	
	\begin{theorem}\label{theorem_asymptotics_constant}
		Нехай $a = a(t)$ та $b_k = b_k(t), \ k \in \{1, 2, ..., m\}$ -- прогресивно вимірні\footnote{Випадковий процес $a = a(t)$ назвемо прогресивно вимірним, якщо для будь-яких $t \geq 0$ звуження відображення $a$ на множину $[0, t] \times \Omega$ є вимірним відносно сигма-алгебри $\B([0, t]) \otimes \F_t$.} випадкові процеси, $c_k = c_k(t, u), \ k \in \{1, 2, ..., l\}$ -- $\PP$-вимірне випадкове поле та нехай $\FF$-узгоджений c\`adl\`ag випадковий процес $X = X(t)$ має стохастичний диференціал
		$$\d X(t) = a(t)\d t + \sum_{k = 1}^m b_k(t)\d W_k(t) + \sum_{k = 1}^l \int_\R c_k(t, u)\tilde N_k(\d t, \d u),$$
		причому $\E X^2(0) < \infty$.
		Припустимо, що:
		\begin{enumerate}[label=(\Alph*)]
			\item випадковий процес $a$ обмежений та $a(t) \to A, \ t \to \infty,$ м.~н., де $A > 0$ -- випадкова величина;
			\item для деякого $0 \leq \beta < \frac12$
			\begin{equation}\label{diffusion_estimate}
				\sum_{k = 1}^m b_k^2(t) + \sum_{k = 1}^l \int_\R c_k^2(t, u) \nu_k(\d u) \leq C(1 + |X(t-)|^{2\beta}).
			\end{equation}
		\end{enumerate}
		Тоді $X(t) \sim At, \ t \to \infty,$ м.~н.
	\end{theorem}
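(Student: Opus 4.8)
The plan is to pass to the integral form
$$X(t) = X(0) + \int_0^t a(s)\,\d s + \sum_{k=1}^m \int_0^t b_k(s)\,\d W_k(s) + \sum_{k=1}^l \int_0^t\int_\R c_k(s,u)\,\tilde N_k(\d s,\d u),$$
divide by $t$, and show that every term on the right except the drift integral tends to $0$ a.s., while $\frac1t\int_0^t a(s)\,\d s \to A$. Since $A>0$ a.s., the conclusion $X(t)/t \to A$ is exactly $X(t)\sim At$. The elementary terms are immediate: $X(0)/t\to 0$ a.s. because $X(0)$ is finite a.s., and for the drift, assumption (A) gives $a(s)\to A$ a.s. with $a$ bounded (hence locally integrable), so the pathwise Ces\`aro (Toeplitz) theorem yields $\frac1t\int_0^t a(s)\,\d s \to A$ a.s.

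The crux is an a priori moment bound, namely $\E X^2(t)=O(t^2)$ as $t\to\infty$. I would obtain it from It\^o's formula applied to $X^2$. Localizing by $\tau_n=\inf\{t:|X(t)|\ge n\}$ to turn the stochastic integrals into genuine martingales and taking expectations kills the martingale parts, leaving, with $f_n(t)=\E X^2(t\wedge\tau_n)$,
$$f_n(t) = \E X^2(0) + \E\int_0^{t\wedge\tau_n}\Big(2X(s-)a(s) + \sum_{k=1}^m b_k^2(s) + \sum_{k=1}^l\int_\R c_k^2(s,u)\,\nu_k(\d u)\Big)\d s.$$
By the boundedness of $a$ in (A), estimate (\ref{diffusion_estimate}) in (B), and the fact that $2\beta<1$ forces $|x|^{2\beta}\le 1+|x|$, the integrand is bounded by $C(1+|X(s-)|)$. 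Controlling the linear term by Cauchy--Schwarz, $\E|X(s-)|\le(\E X^2(s-))^{1/2}$, I arrive at an integral inequality of the form
$$f_n(t)\le C + C\int_0^t\big(1+\sqrt{f_n(s)}\big)\,\d s.$$

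I expect this estimate to be the main obstacle, and the essential feature is the square root. Comparing with the solution of $h'=C(1+\sqrt h)$ gives $h(t)=O(t^2)$, so $f_n(t)\le C(1+t^2)$ uniformly in $n$, and Fatou's lemma as $n\to\infty$ yields $\E X^2(t)\le C(1+t^2)$. The point is that the naive bound $\E|X|\le 1+\E X^2$ would only produce exponential growth through Gronwall; retaining the genuinely linear-in-$|X|$ drift and using $\E|X|\le(\E X^2)^{1/2}$ is what gives the correct quadratic order, matching $X^2(t)\sim A^2t^2$.

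With $\E X^2(t)=O(t^2)$ in hand the martingale terms are routine. Taking expectations in (\ref{diffusion_estimate}) and using Jensen's inequality $\E|X(t-)|^{2\beta}\le(\E X^2(t-))^\beta$ (valid since $\beta<1$), the bound on $\E X^2(t-)$ gives $\E b_k^2(t)\le C(1+t^{2\beta})$ and $\E\int_\R c_k^2(t,u)\,\nu_k(\d u)\le C(1+t^{2\beta})$ with the same $\beta<\frac12$. Since the $b_k$ are progressively measurable and the $c_k$ are $\PP$-measurable, Corollary \ref{ito_integral_asymptotics} then gives $\frac1t\int_0^t b_k(s)\,\d W_k(s)\to 0$ a.s. for each $k$, and Corollary \ref{jump_integral_asymptotics} gives $\frac1t\int_0^t\int_\R c_k(s,u)\,\tilde N_k(\d s,\d u)\to 0$ a.s. for each $k$. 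Summing the finitely many terms and combining with the drift limit yields $X(t)/t\to A$ a.s., i.e. $X(t)\sim At$, as required.
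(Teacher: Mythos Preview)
Your proposal is correct and follows the paper's two-step architecture: first obtain the a~priori bound $\E X^2(t)\le C(1+t^2)$, then divide the integral representation by $t$ and invoke Corollaries~\ref{ito_integral_asymptotics} and~\ref{jump_integral_asymptotics} for the martingale terms and Ces\`aro averaging for the drift. The only substantive difference lies in Step~1. The paper does not apply It\^o's formula to $X^2$; instead it squares the integral representation of $X$, uses the elementary inequality $(x_1+\dots+x_4)^2\le 4(x_1^2+\dots+x_4^2)$, bounds the drift block directly by $Ct^2$ (since $a$ is bounded), and bounds the martingale block via the It\^o isometry, condition~(B), and Jensen's inequality. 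This yields the integral inequality
\[
\E X^2(t)\le C(1+t^2)+C\int_0^t(\E X^2(s-))^\beta\,\d s,
\]
which is resolved by a Wendroff (Bihari-type) lemma with exponent $\beta<\tfrac12$. Your route through It\^o's formula on $X^2$ picks up the cross term $2X(s-)a(s)$ and hence the coarser exponent $\tfrac12$ in the integral inequality, but since $\tfrac12<1$ still forces quadratic growth of the comparison ODE, the conclusion is identical. The paper's version avoids localization and Fatou at the price of citing an external nonlinear Gronwall lemma; your version is more self-contained but needs the stopping-time argument to justify dropping the stochastic integrals.
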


	\begin{proof}
		Запишемо процес $X$ в інтегральній формі:
		$$X(t) = X(0) + \int_0^t a(s)\d s + \sum_{k = 1}^m\int_0^t b_k(s) \d W_k(s) + \sum_{k = 1}^l\int_0^t \int_\R c_k(s, u) \tilde N_k(\d s, \d u).$$
		
		\emph{Крок 1.} Спочатку перевіримо, що $\E X^2(t) \leq C(1 + t^2), \ t \geq 0,$ для деякого $C > 0$. Аналогічно до леми 3.3.2 з книги \cite{kunita2019stochastic} можна перевірити, що з умови (\ref{diffusion_estimate}) випливає, що $\sup_{0 \leq t \leq T}\E X^2(t) < \infty, \ T \geq 0$. За нерівністю Коші--Буняковського
		$$\frac14\E X^2(t) \leq \E X^2(0) + \E\left(\int_0^t a(s)\d s\right)^2$$
		$$+ \E\left[\left(\sum_{k = 1}^m\int_0^t b_k(s) \d W_k(s)\right)^2 + \left(\sum_{k = 1}^l\int_0^t \int_\R c_k(s, u) \tilde N_k(\d s, \d u)\right)^2\right]$$
		$$=: E_1 + E_2(t) + E_3(t).$$
		Оцінимо доданки у правій частині:
		$$E_1 = \E X^2(0) < \infty \qquad \text{за припущенням};$$
		$$E_2(t) = \E\left(\int_0^t a(s)\d s\right)^2 \leq Ct^2, \qquad \text{бо $a$ обмежене};$$
		$$E_3(t) = \E\left[\left(\sum_{k = 1}^m\int_0^t b_k(s) \d W_k(s)\right)^2 + \left(\sum_{k = 1}^l \int_0^t \int_\R c_k^2(s, u) \tilde N_k(\d s, \d u)\right)^2\right]$$
		\begin{equation*}\ltag{оскільки доданки сум незалежні}
			= \E\left[\sum_{k = 1}^m\left(\int_0^t b_k(s) \d W_k(s)\right)^2 + \sum_{k = 1}^l \int_0^t \left(\int_\R c_k^2(s, u) \tilde N_k(\d s, \d u)\right)^2\right]
		\end{equation*}
		\begin{equation*}\ltag{за ізометрією Іто}
			= \E\left[\sum_{k = 1}^m\int_0^t b_k^2(s) \d s + \sum_{k = 1}^l\int_0^t \int_\R c_k^2(s, u) \nu(\d u)\d s\right]
		\end{equation*}
		\begin{equation*}\ltag{за умовою (Б)}
			\leq C\E\int_0^t \left(1 + |X(s-)|^{2\beta}\right) \d s
		\end{equation*}
		\begin{equation*}\ltag{за теоремою Фубіні}
			= \int_0^t\E\left(C(1 + |X(s-)|^{2\beta})\d s\right)
		\end{equation*}
		\begin{equation*}\ltag{за нерівністю Єнсена}
			\leq C\left(t + \int_0^t\left(\E X^2(s-)\right)^\beta\d s\right).
		\end{equation*}
		Таким чином, отримали оцінку
		$$\E X^2(t) \leq C(1 + t^2) + C\int_0^t \left(\E X^2(s-)\right)^\beta \d s.$$
		Використовуючи лему Вендроффа (див. теорему 7.3 у \cite{mao1994exponential}), яка є узагальненням леми Гронуолла-Беллмана, отримуємо
		$$\E X^2(t) \leq C\left((1 - \beta)t + (1 + t^2)^{1 - \beta}\right)^\frac1{1 - \beta},$$
		з чого неважко вивести
		$$\E X^2(t) \leq C(1 + t^2), \ t \geq 0.$$
		
		\emph{Крок 2.} Тепер знайдемо асимптотику розв'язку $X(t)$ при $t \to \infty$. Поділимо стохастичне диференціальне рівняння на $t > 0$:
		$$\frac{X(t)}t = \frac{X(0)}{t} + \frac1t\int_0^t a(s)\d s$$
		$$+ \sum_{k = 1}^m \frac1t\int_0^t b_k(s)\d W_k(s) + \sum_{k = 1}^l \frac1t\int_0^t\int_\R c_k(s, u)\tilde N_k(\d s, \d u)$$
		$$=: T_1(t) + T_2(t) + T_3(t) + T_4(t).$$
		Дослідимо збіжність доданків у правій частині при $t \to \infty$. Маємо $T_1(t) = \frac{X(0)}t \to 0, \ t \to \infty$. З умов теореми випливає, що
		$$\lim_{t \to \infty} T_2(t) = \lim_{t \to \infty}\frac1t\int_0^t a(s)\d s = \lim_{t \to \infty} a(t) = A \ \text{м.~н.}$$
		Для оцінки доданка $T_3$ зазначимо, що
		$$\E b_k^2(t) \leq \E \left(C(1 + |X(t-)|^{2\beta})\right) = C\left(1 + \E \left(X^2(t-)\right)^\beta\right)$$
		\begin{equation*}\ltag{за нерівністю Єнсена}
			\leq C\left(1 + \left(\E X^2(t-)\right)^\beta\right)
		\end{equation*}
		$$\leq C\left(1 + \left(C(1 + t^2)\right)^\beta\right) \leq C(1 + t^{2\beta}), \ k \in \{1, 2, ..., m\},$$
		тому за наслідком \ref{ito_integral_asymptotics}
		$$T_3(t) = \sum_{k = 1}^m \frac1t\int_0^t b_k(s) \d W_k(s) \to 0, \ t \to \infty, \ \text{м.~н.}$$
		Аналогічно попередньому пункту,
		$$\E \int_{\R} c_k^2(t, u) \nu_k(\d u) \leq C(1 + t^{2\beta}), \ k \in \{1, 2, ..., l\},$$
		тому за наслідком \ref{jump_integral_asymptotics}
		$$T_4(t) = \sum_{k = 1}^l\frac1t\int_0^t\int_\R c_k(s, u)\tilde N_k(\d s, \d u) \to 0, \ t \to \infty, \ \text{м.~н.}$$
		Таким чином, з одержаних збіжностей отримуємо твердження теореми.
	\end{proof}

	Якщо замість умови (А) у попередній теоремі розглянути умову
	\begin{enumerate}
		\item[(A')] $A_- \leq a(t) \leq A_+, \ t \geq 0,$ де $A_- > 0, \ A_+ > 0$ -- випадкові величини,
	\end{enumerate}
	то можна довести наступний результат.
	
	\begin{theorem}
		Нехай виконуються умови (А') та (Б). Тоді м.~н.
		$$A_- \leq \liminf_{t \to \infty} \frac{X(t)}t \leq \limsup_{t \to \infty} \frac{X(t)}t \leq A_+.$$
	\end{theorem}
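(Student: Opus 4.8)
The plan is to adapt, almost verbatim, the two-step argument behind Theorem \ref{theorem_asymptotics_constant}: conditions (A') and (B) differ from (A) and (B) only in that the drift $a$ is no longer required to converge, but is merely squeezed between the two positive random variables $A_-$ and $A_+$. The entire moment-estimate part of the proof (Step 1) used assumption (A) solely through the boundedness of $a$, which (A') still supplies. Concretely, $A_- \leq a(t) \leq A_+$ gives $E_2(t) = \E\left(\int_0^t a(s)\,\d s\right)^2 \leq Ct^2$, and combining this with (B) and the Wendroff inequality yields the same a priori bound
$$\E X^2(t) \leq C(1 + t^2), \quad t \geq 0.$$
So Step 1 carries over with no new idea needed.

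For Step 2 I would again divide the integral form of the equation by $t>0$ and write
$$\frac{X(t)}{t} = T_1(t) + T_2(t) + T_3(t) + T_4(t),$$
where $T_1(t) = X(0)/t$, $T_2(t) = \frac1t\int_0^t a(s)\,\d s$, and $T_3(t)$, $T_4(t)$ are the normalised Wiener and compensated-Poisson integral terms. Exactly as in Theorem \ref{theorem_asymptotics_constant}, $T_1(t) \to 0$, and the moment bound above feeds (via $\E b_k^2(t) \leq C(1+t^{2\beta})$ and $\E\int_\R c_k^2(t,u)\nu_k(\d u) \leq C(1+t^{2\beta})$) into Corollaries \ref{ito_integral_asymptotics} and \ref{jump_integral_asymptotics}, giving $T_3(t) \to 0$ and $T_4(t) \to 0$ a.s. The only genuine departure is the drift term: the Cesàro limit $T_2(t) \to A$ is no longer available, but integrating the pointwise bound $A_- \leq a(s) \leq A_+$ over $[0,t]$ immediately yields $A_- \leq T_2(t) \leq A_+$ for every $t>0$, hence
$$A_- \leq \liminf_{t \to \infty} T_2(t) \leq \limsup_{t \to \infty} T_2(t) \leq A_+ \quad \text{м.~н.}$$

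To finish, I would use that $T_1(t) + T_3(t) + T_4(t) \to 0$ a.s., so adding this null term changes neither the $\liminf$ nor the $\limsup$: $\liminf_t X(t)/t = \liminf_t T_2(t)$ and $\limsup_t X(t)/t = \limsup_t T_2(t)$, and the claimed two-sided inequality follows. I do not expect a serious obstacle, since the statement is a direct weakening of Theorem \ref{theorem_asymptotics_constant} in which convergence of the drift is traded for a two-sided envelope. The one place deserving care is the Step 1 moment bound when $A_+$ is truly random: to keep the constant in $E_2(t) \leq Ct^2$ deterministic one should either retain a deterministic envelope for $a$ (as in condition (A)) or assume $A_+ \in L^2$; under such an integrability hypothesis the argument goes through cleanly.
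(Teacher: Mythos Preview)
The paper does not actually supply a proof of this theorem; it merely states that replacing (A) by (A${}'$) one can prove the displayed two-sided bound, leaving the adaptation of the Theorem~\ref{theorem_asymptotics_constant} argument to the reader. Your proposal carries out precisely that adaptation and is the intended approach: Step~1 (the a~priori bound $\E X^2(t)\le C(1+t^2)$) and the conclusions $T_1,T_3,T_4\to 0$ go through verbatim, and your observation that the pathwise sandwich $A_-\le a(s)\le A_+$ yields $A_-\le T_2(t)\le A_+$ for every $t>0$ is exactly the replacement for the Ces\`aro step.

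Your closing caveat is well taken and is a point the paper glosses over. Condition~(A) contains a \emph{deterministic} bound on $a$, used in Step~1 to get $E_2(t)\le Ct^2$; condition~(A${}'$) alone gives only the random envelope $A_+$, so $\E\bigl(\int_0^t a(s)\,\d s\bigr)^2\le t^2\,\E A_+^2$ may fail without an integrability hypothesis. Your suggested fix $A_+\in L^2$ is sufficient. A cleaner remedy, which avoids adding hypotheses, is localisation: if $A_\pm$ are $\F_0$-measurable (the natural reading here), then for each deterministic $K>0$ replace $a$ by $\hat a = a\,\I_{\{A_+\le K\}} + \I_{\{A_+>K\}}$; this drift is bounded by $K\vee 1$, is progressively measurable, and the corresponding process $\hat X$ coincides with $X$ on the $\F_0$-event $\{A_+\le K\}$. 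Running the full moment argument for $\hat X$ gives the conclusion on $\{A_+\le K\}$, and letting $K\uparrow\infty$ covers $\Omega$ up to a null set.
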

		
	Наступна теорема є головним результатом даної статті.
	
	\begin{theorem}\label{theorem_asymptotics_power}
		Нехай $X$ -- деякий (необов'язково єдиний) розв'язок стохастичного диференціального рівняння
		$$\d X(t) = a(X(t))\d t + \sum_{k = 1}^m b_k(X(t-))\d Z_k(t),$$
		де $a = a(x)$, $b_k = b_k(x), \ k \in \{1, 2, ..., m\}$ -- локально обмежені вимірні функції, $Z_k = Z_k(t), \ k \in \{1, 2, ..., m\}$ -- незалежні центровані процеси Леві зі скінченним другим моментом, $\E X^2(0) < \infty$. Нехай $\alpha \in [0, 1)$.
		Припустимо, що:
		\begin{enumerate}[label=(\Alph*)]
			\item $a(x) \sim Ax^\alpha, \ x \to +\infty,$ де $A > 0$ -- невипадкова стала;
			\item для деякого $2\beta \in [0, 1 + \alpha)$
			$$\sum_{k = 1}^m b_k^2(x) \leq C\left(1 + |x|^{2\beta}\right);$$
			\item $X(t) \to +\infty, \ t \to \infty, \ \text{м.~н.}$
		\end{enumerate}
		Тоді
		\begin{equation}\label{solution_asymptotics}
			X(t) \sim \left((1 - \alpha)At\right)^\frac1{1 - \alpha}, \ t \to \infty, \ \text{м.~н.}
		\end{equation}
	\end{theorem}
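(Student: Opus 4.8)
The plan is to reduce the power-drift case to the already-established constant-drift case (Theorem~\ref{theorem_asymptotics_constant}) by a Lamperti-type change of variable. Motivated by the fact that the solution of $\dot x = Ax^\alpha$ satisfies $\frac{\d}{\d t}\frac{x^{1-\alpha}}{1-\alpha} = A$, I would introduce a function $g \in C^2(\R)$ with $g(x) = \frac{x^{1-\alpha}}{1-\alpha}$ for $x \geq 1$ and with $g, g', g''$ bounded on $(-\infty, 1]$, so that $g'(x) = x^{-\alpha}$, $g''(x) = -\alpha x^{-\alpha-1}$ for large $x$ and $|g(x)| \leq C(1+|x|^{1-\alpha})$ everywhere. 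Using the L\'evy--It\^o decomposition $Z_k(t) = \sigma_k W_k(t) + \int_0^t\int_\R u\,\tilde N_k(\d s,\d u)$ (the drift vanishes since $Z_k$ is centered), I would apply the It\^o formula for semimartingales with jumps to $Y(t) := g(X(t))$. This produces a differential of exactly the form treated in Theorem~\ref{theorem_asymptotics_constant}, with $m$ independent Wiener processes and $m$ independent compensated Poisson measures, transformed drift
\[
\tilde a(t) = g'(X)a(X) + \tfrac12 g''(X)\sum_k \sigma_k^2 b_k^2(X) + \sum_k\int_\R\big(g(X+b_k(X)u)-g(X)-g'(X)b_k(X)u\big)\nu_k(\d u),
\]
diffusion coefficients $\tilde b_k = \sigma_k g'(X)b_k(X)$ and jump fields $\tilde c_k(t,u) = g(X(t-)+b_k(X(t-))u)-g(X(t-))$.

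The next step is to verify the hypotheses of Theorem~\ref{theorem_asymptotics_constant} for $Y$. Since $a(x)\sim Ax^\alpha$ and $g'(x)=x^{-\alpha}$ for large $x$, condition (C) gives $g'(X(t))a(X(t)) \to A$. Both correction terms in $\tilde a$ are $O(|X|^{2\beta-\alpha-1})$, which tends to $0$ because $2\beta < 1+\alpha$; hence $\tilde a(t)\to A$ a.s. For the growth condition, the point is that on $\{X \geq 1\}$ one has $\tilde b_k^2 + \int_\R \tilde c_k^2\,\nu_k(\d u) \leq C|X|^{2(\beta-\alpha)}$, and since $Y \approx \frac{X^{1-\alpha}}{1-\alpha}$ this is $\leq C(1+|Y|^{2\beta'})$ with $2\beta' = \frac{2(\beta-\alpha)}{1-\alpha}$; the bound $2\beta < 1+\alpha$ is exactly what forces $\beta' < \tfrac12$, so that Corollaries~\ref{ito_integral_asymptotics} and~\ref{jump_integral_asymptotics} (as used inside Theorem~\ref{theorem_asymptotics_constant}) apply. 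Invoking Theorem~\ref{theorem_asymptotics_constant} yields $Y(t)\sim At$; since $X(t)\to+\infty$ we eventually have $g(X(t)) = \frac{X(t)^{1-\alpha}}{1-\alpha}$, so $\frac{X(t)^{1-\alpha}}{1-\alpha}\sim At$, which rearranges to (\ref{solution_asymptotics}).

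The main obstacle is controlling the jump contributions to $\tilde a$ and to $\int_\R \tilde c_k^2\,\nu_k(\d u)$, because the naive bound $|g(X+b_k(X)u)-g(X)|\leq \|g'\|_\infty|b_k(X)|\,|u|$ only yields the exponent $\beta$, which is too weak once $2\beta \geq 1$. I would split each $u$-integral at $|b_k(X)u| = X/2$. On the small-jump region the mean value theorem together with the monotonicity of $g'$ gives $g'(\xi) \leq CX^{-\alpha}$, producing the sharp $O(X^{2(\beta-\alpha)})$ bound; on the large-jump region I would use the sublinear growth $|g(x)|\leq C(1+|x|^{1-\alpha})$ together with the tail estimates $\nu_k(|u|>R)\leq CR^{-2}$ and $\int_{|u|>R}|u|^{2(1-\alpha)}\nu_k(\d u)\leq CR^{-2\alpha}$ (both consequences of $\int_\R u^2\nu_k(\d u)<\infty$), with $R = X/(2|b_k(X)|)$, to show that this region also contributes only $O(X^{2(\beta-\alpha)})$. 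These computations are the natural place for the auxiliary lemmas of Section~4. A secondary technical point is that $\tilde a$ need not be uniformly bounded where $X$ is small or negative, since the coefficients are only locally bounded; I would handle this by a localization based on the last time $X$ drops below $1$ (finite a.s.\ by~(C)), modifying the coefficients below level $1$ so as not to affect the large-$t$ dynamics.
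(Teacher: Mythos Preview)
Your approach is essentially identical to the paper's: the same Lamperti transformation $f(x)=x^{1-\alpha}/(1-\alpha)$ for large $x$, It\^o's formula, reduction to Theorem~\ref{theorem_asymptotics_constant}, and the same splitting of the jump integrals at the threshold $|b_k(X)u|\sim X$ (equivalently $|u|\sim X^{1-\beta}$), with the resulting exponent $2\tilde\beta=\frac{2(\beta-\alpha)}{1-\alpha}<1$. The paper packages your small-/large-jump estimates as Lemmas~\ref{convergence_lemma} and~\ref{estimate_lemma}, handling the tail region via H\"older's inequality rather than your direct bound $\int_{|u|>R}|u|^{2(1-\alpha)}\nu(\d u)\le CR^{-2\alpha}$, and simply asserts boundedness of $\tilde a$ where you (more carefully) flag the need for a last-exit-time localization.
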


	\begin{remark}
		Умова (В) є суттєвою та не випливає з умов (А)-(Б). Вона зустрічається у працях \cite{buldygin2018asymptotic}, \cite{gikhman1982stochastic}. Для виконання умови (В) достатньо, наприклад, виконання наступних умов:
		\begin{itemize}
			\item коефіцієнти $a$ та $b_k, \ k \in \{1, 2, ..., m\},$ задовольняють умову Ліпшиця;
			\item $\lim_{|x| \to +\infty} \frac{a(x)}{|x|^\alpha} > 0;$
			\item для деякого $k \in \{1, 2, ..., m\}$ виконуються умови:
			\begin{itemize}
				\item[\ding{51}] $\inf_{|x| \leq R} |b_k(x)| > 0, \ R > 0,$
				\item[\ding{51}] $Z_k(t), \ t \geq 0,$ має невироджену гауссівську компоненту або додатні стрибки з імовірністю 1.
			\end{itemize}
		\end{itemize}
	\end{remark}

	\begin{proof}
		Справедливість теореми у випадку $\alpha = 0$ випливає з теореми \ref{theorem_asymptotics_constant}; надалі вважаємо, що $\alpha \in (0, 1)$. З умови випливає, що процеси $Z_k, \ k \in \{1, 2, ..., m\},$ допускають представлення
		$$\d Z_k(t) = \sigma_k\d W_k(t) + \int_\R u\tilde N_k(\d t, \d u), \ k \in \{1, 2, ..., m\}.$$
		Тут $\sigma_k \geq 0$, $W_k$ -- вінерівський процес, $\tilde N_k$ -- компенсована пуассонівська міра з компенсатором $\d t \cdot \nu_k(\d u)$, де міра $\nu_k$ така, що $\int_\R u^2\nu_k(\d u) < \infty$, $k \in \{1, 2, ..., m\}$. При цьому $W_1, W_2, ..., W_m, \tilde N_1, \tilde N_2, ..., \tilde N_m$ незалежні. Далі для скорочення позначень проведемо міркування лише для рівняння
		\begin{equation}\label{sde_with_jumps}
			\d X(t) = a(X(t))\d t + b(X(t))\d W(t) + \int_R c(X(t-))u\tilde N(\d t, \d u),
		\end{equation}
		де $b = b(x), c = c(x)$ -- локально обмежені вимірні функції, $W = W(t)$ -- вінерівський процес, $\tilde N = \tilde N(\d t, \d u)$ -- компенсована пуассонівська випадкова міра з компенсатором $\d t \cdot \nu(\d u)$, де міра $\nu$ така, що $\int_\R u^2 \nu(\d u) < \infty$, процеси та міри $W_1, W_2, ..., W_m, \tilde N_1, \tilde N_2, ..., \tilde N_m$ незалежні та
		$$b^2(x) + c^2(x) \leq C\left(1 + |x|^{2\beta}\right).$$
		
		Візьмемо таку двічі неперервно диференційовну функцію $f = f(x)$, що
		\begin{equation*}
			f(x) = \begin{cases}
				0  & x \leq 0 \\
				\frac{x^{1 - \alpha}}{1 - \alpha} & x \geq 1,
			\end{cases}
		\end{equation*}
		причому $f(x) \leq \frac{x^{1 - \alpha}}{1 - \alpha}, \ 0 < x < 1$. Позначимо $\tilde X(t) = f(X(t))$. За формулою Іто
		\begin{equation}\label{sde_with_jumps_new}
			\d \tilde X(t) = \tilde a(t)\d t + \tilde b(t) \d W(t) + \int_\R \tilde c(t, u)\tilde N(\d t, \d u),
		\end{equation}
		де
		$$\tilde a(t) = a(X(t-))f'(X(t-)) + \frac12 b^2(X(t-))f''(X(t-))$$
		$$+ \int_\R\Big(f\big(X(t-) + c(X(t-))u\big) - f(X(t-)) - c(X(t-))uf'(X(t-))\Big)\nu(\d u)$$
		$$=: \tilde a_1(t) + \tilde a_2(t) + \tilde a_3(t),$$
		$$\tilde b(t) = b(X(t-))f'(X(t-)),\qquad \tilde c(t, u) = f\big(X(t-) + c(X(t-))u\big) - f(X(t-)).$$
		
		Для подальшого доведення необхідні наступні дві леми, доведення яких ми винесли у додаток (розділ 4).
			
		\begin{lemma}\label{convergence_lemma}
			$$\int_\R\big(f\left(x + c(x)u\right) - f(x) - f'(x)c(x)u\big)\nu(\d u) \to 0, \ x \to +\infty.$$
		\end{lemma}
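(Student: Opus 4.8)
The plan is to bound the integrand pointwise and then split the $\nu$-integral according to the size of the jump $c(x)u$ relative to $x$. Writing $g(x,u) := f(x+c(x)u) - f(x) - f'(x)c(x)u$, Taylor's formula with Lagrange remainder gives $g(x,u) = \frac12 f''(\xi)(c(x)u)^2$ for some $\xi$ between $x$ and $x+c(x)u$. Since $f \in C^2$ with $f'' = 0$ on $(-\infty,0]$ and $f''(y) = -\alpha y^{-\alpha-1}$ on $[1,\infty)$, the second derivative is bounded, $\sup_y|f''(y)| =: K < \infty$. This alone only yields $|g(x,u)| \le \frac12 K c^2(x)u^2$, whose $\nu$-integral is at most $\frac12 K c^2(x)\int_\R u^2\nu(\d u)$ and does \emph{not} tend to $0$, because $c^2(x)$ may grow; so a uniform second-order bound is insufficient and one must exploit the decay of $f''$ at infinity. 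Accordingly I set $R_x := x/(2|c(x)|)$ (with $R_x = +\infty$ if $c(x)=0$) and split $\int_\R = \int_{|u|\le R_x} + \int_{|u|>R_x}$. The growth bound $b^2+c^2\le C(1+|x|^{2\beta})$ gives $|c(x)|\le C(1+x^\beta)$, and since $\beta<\frac{1+\alpha}2<1$ we have $R_x\ge x^{1-\beta}/C\to+\infty$.

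On the small-jump region $\{|u|\le R_x\}$ we have $|c(x)u|\le x/2$, so $\xi\in[x/2,3x/2]$ and, for $x\ge2$, $|f''(\xi)|\le\alpha(x/2)^{-\alpha-1}\le Cx^{-\alpha-1}$. Hence the Taylor remainder gives $\int_{|u|\le R_x}|g(x,u)|\nu(\d u)\le Cx^{-\alpha-1}c^2(x)\int_\R u^2\nu(\d u)$, and since $x^{-\alpha-1}c^2(x)\le C(x^{-\alpha-1}+x^{2\beta-\alpha-1})\to0$ (using $2\beta<1+\alpha$), this part vanishes as $x\to+\infty$.

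On the large-jump region $\{|u|>R_x\}$ the second-order expansion is useless; here I bound the three terms of $g$ separately, using $|f(y)|\le C(1+|y|^{1-\alpha})$, the subadditivity $(a+b)^{1-\alpha}\le a^{1-\alpha}+b^{1-\alpha}$ for $a,b\ge0$, and the tail estimates $\nu(\{|u|>R\})\le R^{-2}\int u^2\nu(\d u)$ and $\int_{|u|>R}|u|^p\nu(\d u)\le R^{p-2}\int u^2\nu(\d u)$ for $p\in[0,2]$. This produces a finite sum of terms, each a power of $x$ times $\int u^2\nu(\d u)$; substituting $R_x\asymp x^{1-\beta}$, $|c(x)|\le Cx^\beta$, $f(x)\le Cx^{1-\alpha}$ and $f'(x)=x^{-\alpha}$, every exponent collapses to at most $2\beta-\alpha-1$. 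The decisive cancellation occurs in the term $|c(x)|^{1-\alpha}\int_{|u|>R_x}|u|^{1-\alpha}\nu(\d u)$, whose exponent is $\beta(1-\alpha)-(1-\beta)(1+\alpha)=2\beta-\alpha-1$, the $\alpha\beta$-cross terms cancelling exactly. Since hypothesis (Б) reads $2\beta<1+\alpha$, i.e. $2\beta-\alpha-1<0$, every such term tends to $0$, and the large-jump contribution vanishes as well.

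Combining the two regions gives $\int_\R g(x,u)\nu(\d u)\to0$ as $x\to+\infty$, as claimed. I expect the large-jump region to be the main obstacle: it is the only place where the crude growth of $f$ and of $c$ competes against the tail decay of $\nu$, and the argument succeeds only because the several competing powers of $x$ all reduce to the single exponent $2\beta-\alpha-1$, which is negative precisely under condition (Б). The small-jump region, by contrast, is a routine application of the second-order Taylor bound together with the decay $f''(\xi)\sim-\alpha\xi^{-\alpha-1}$.
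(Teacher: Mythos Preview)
Your proof is correct and mirrors the paper's: both split the integral at a threshold of order $x^{1-\beta}$, handle the inner region via the second-order Taylor remainder with $|f''(\xi)|\le C\xi^{-\alpha-1}$ for $\xi\in[x/2,3x/2]$, and bound the three pieces of the integrand separately on the outer region so that every surviving term decays like $x^{2\beta-\alpha-1}$. The only tactical differences are your threshold $R_x=x/(2|c(x)|)$ (the paper fixes $Kx^{1-\beta}$ directly) and your use of the subadditivity $(a+b)^{1-\alpha}\le a^{1-\alpha}+b^{1-\alpha}$ on the term $\int_{|u|>R_x} f(x+c(x)u)\,\nu(\d u)$, where the paper instead applies H\"older's inequality with exponents $\tfrac{2}{1+\alpha},\tfrac{2}{1-\alpha}$; both routes land on the same exponent.
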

		
		\begin{lemma}\label{estimate_lemma}
			$$\int_\R\big(f\left(x + c(x)u\right) - f(x)\big)^2\nu(\d u) \leq Cx^{2(\beta - \alpha)}, \ x \geq 1.$$
		\end{lemma}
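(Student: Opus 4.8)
The plan is to estimate the integrand by splitting $\R$ into a small-jump region $A = \{u : |c(x)u| \le x/2\}$ and a large-jump region $B = \{u : |c(x)u| > x/2\}$, and to play off two complementary features of $f$: its derivative is small far from the origin, while $f$ itself grows only sublinearly. Concretely, since $f$ is $C^2$ with $f'(y) = y^{-\alpha}$ for $y \ge 1$ and $f' \equiv 0$ on $(-\infty,0]$, one has the global bounds $|f'(y)| \le C(1 \vee y)^{-\alpha}$ and $|f(y)| \le C(1 + |y|)^{1 - \alpha}$ (with $f \equiv 0$ on $(-\infty,0]$). Throughout I use $c^2(x) \le C(1 + |x|^{2\beta})$, hence $|c(x)| \le C x^{\beta}$ for $x \ge 1$, together with $\int_\R u^2 \nu(\d u) < \infty$.

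On region $A$ the segment joining $x$ to $x + c(x)u$ stays in $[x/2, 3x/2]$, so the bound $|f'(y)| \le C(1 \vee y)^{-\alpha}$ gives $|f'| \le C x^{-\alpha}$ along it, and the mean value theorem yields $|f(x + c(x)u) - f(x)| \le C x^{-\alpha} |c(x)| |u|$. Squaring and integrating,
$$\int_A \big(f(x + c(x)u) - f(x)\big)^2 \nu(\d u) \le C x^{-2\alpha} c^2(x) \int_\R u^2 \nu(\d u) \le C x^{-2\alpha + 2\beta} = C x^{2(\beta - \alpha)},$$
which already has the target order.

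On region $B$ I would discard the derivative and use only the growth of $f$, writing, after $(a + b)^p \le C(a^p + b^p)$,
$$\big(f(x + c(x)u) - f(x)\big)^2 \le C\big(1 + x^{2(1 - \alpha)} + |c(x)|^{2(1 - \alpha)} |u|^{2(1 - \alpha)}\big).$$
Two ingredients then finish the job. First, Chebyshev's inequality gives $\nu(B) = \nu\{|u| > x/(2|c(x)|)\} \le C c^2(x) x^{-2} \le C x^{2\beta - 2}$, so the constant and $x^{2(1-\alpha)}$ terms contribute at most $C \nu(B)\big(1 + x^{2(1 - \alpha)}\big) \le C x^{2\beta - 2}\big(1 + x^{2(1 - \alpha)}\big) \le C x^{2(\beta - \alpha)}$. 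Second, on $B$ one has $|u|^{2(1 - \alpha)} = |u|^2 |u|^{-2\alpha} \le |u|^2 \big(x/(2|c(x)|)\big)^{-2\alpha}$, whence
$$|c(x)|^{2(1 - \alpha)} \int_B |u|^{2(1 - \alpha)} \nu(\d u) \le C c^2(x) x^{-2\alpha} \int_\R u^2 \nu(\d u) \le C x^{2(\beta - \alpha)}.$$
Adding the contributions of $A$ and $B$ gives the claim.

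The main obstacle I expect is region $B$: because $\nu$ carries only a finite second moment, no power of $|u|$ larger than $2$ can be integrated, so the sublinear growth of $f$ must be genuinely converted into a gain in $x$. The device above — replacing $|u|^{2(1-\alpha)}$ by $|u|^2$ times the lower bound $|u| > x/(2|c(x)|)$ available on $B$, combined with the Chebyshev estimate for $\nu(B)$ — is exactly what forces the large-jump contribution down to the order $x^{2(\beta-\alpha)}$ produced by the small jumps. I note that the restriction $2\beta < 1 + \alpha$ is not needed for this estimate itself; it enters only later, when this bound is substituted into Theorem \ref{theorem_asymptotics_constant} applied to the transformed process $\tilde X = f(X)$, to keep the resulting growth exponent below $\tfrac12$.
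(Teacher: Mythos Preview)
Your proof is correct and follows the same overall architecture as the paper's: split into a small-jump region where the segment $[x, x+c(x)u]$ stays in $[x/2,3x/2]$ and apply the mean value theorem with $|f'|\le Cx^{-\alpha}$, then handle the large-jump region using the sublinear growth $|f(y)|\le C(1+|y|)^{1-\alpha}$ together with the Chebyshev tail bound $\nu(B)\le C c^2(x)x^{-2}$. The only notable difference is in how the large-jump integral is closed. The paper expands $(f(x+c(x)u)-f(x))^2\le 2f^2(x+c(x)u)+2f^2(x)$ and then controls $\int_{B}|x+c(x)u|^{2(1-\alpha)}\nu(\d u)$ via H\"older's inequality with exponents $(1/\alpha,\,1/(1-\alpha))$, reducing to the estimates $\nu(B)\le Cx^{2\beta-2}$ and $\int_B (x+c(x)u)^2\nu(\d u)\le Cx^{2\beta}$ already obtained in the proof of Lemma~\ref{convergence_lemma}. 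You instead use the more direct pointwise identity $|u|^{2(1-\alpha)}=|u|^2|u|^{-2\alpha}$ and bound $|u|^{-2\alpha}$ on $B$ by $(x/(2|c(x)|))^{-2\alpha}$, which cleanly converts the fractional moment into a second moment without invoking H\"older. Your route is slightly more elementary and self-contained; the paper's route has the advantage of recycling the intermediate bounds (\ref{estimate1})--(\ref{estimate2}) from the previous lemma. Your choice of threshold $|c(x)u|\le x/2$ (rather than the paper's $|u|<Kx^{1-\beta}$) is also a harmless variant that directly guarantees the interval containment needed for the mean value argument.
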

		
		Перевіримо, що нове стохастичне диференціальне рівняння (\ref{sde_with_jumps_new}) задовольняє умови теореми \ref{theorem_asymptotics_constant}.
		
		Помітимо, що коефіцієнт $\tilde a$ обмежений. Дослідимо його асимптотичну поведінку, дослідивши поведінку кожного з доданків $\tilde a_1(t), \tilde a_2(t), \tilde a_3(t)$:
		$$\lim_{t \to \infty} \tilde a_1(t) = \lim_{t \to \infty} a(X(t))f'(X(t)) = \lim_{t \to \infty} \frac{a(X(t))}{X^\alpha(t)} = \lim_{t \to \infty} \frac{AX^\alpha(t)}{X^\alpha(t)} = A \ \text{м.~н.};$$
		$$\lim_{t \to \infty}|\tilde a_2(t)| = \lim_{t \to \infty} \left|b^2(X(t))f''(X(t))\right| \leq \lim_{t \to \infty}\frac{C|\alpha|X^{2\beta}(t)}{X^{1 + \alpha}(t)}$$
		$$\leq C\lim_{t \to \infty}\frac 1{X^{1 + \alpha - 2\beta}(t)} = 0, \ t \to \infty, \ \text{м.~н.,}$$
		оскільки $1 + \alpha - 2\beta > 0$ та $X(t) \to \infty, \ t \to \infty,$ м.~н.;
		$$\lim_{t \to \infty} \tilde a_3(t) = \lim_{t \to \infty}\int_\R\Big(f\big(X(t-) + c(X(t-))u\big) - f(X(t-))$$
		$$- c(X(t-))uf'(X(t-))\Big)\nu(\d u) = 0, \ t \to \infty, \ \text{м.~н.}$$
		за лемою \ref{convergence_lemma}, оскільки $X(t) \to \infty, \ t \to \infty,$ м.~н.
		Отже, $\lim_{t \to \infty} \tilde a(t) = A$ м.~н.
		
		Оцінимо коефіцієнт $\tilde b$. Якщо $X(t-) \geq 1$, то
		$$\tilde b^2(t) = \big(b(X(t-))f'(X(t-))\big)^2 = b^2(X(t-))\left(f'(X(t-))\right)^2 \leq \frac{CX^{2\beta}(t-)}{X^{2\alpha}(t-)}$$
		$$\leq CX^{2(\beta - \alpha)}(t-) = C\tilde X^\frac{2(\beta - \alpha)}{1 - \alpha}(t-).$$
		Якщо ж $X(t-) < 1$, то $\tilde b^2(t)$ обмежене рівномірно за $t$ невипадковою сталою, оскільки $|f'(x)| \leq C$.
		Отже, $b^2(t) \leq C\left(1 + |\tilde X(t-)|^{2\tilde\beta}\right), \ t \geq 0,$ де $2\tilde\beta := \frac{2(\beta - \alpha)}{1 - \alpha} < 1.$
		
		Оцінимо коефіцієнт $\tilde c$. Якщо $X(t-) \geq 1$, то
		$$\int_\R \tilde c^2(t, u)\nu(\d u) = \int_\R\left(f\left(X(t-) + c(X(t-))u\right) - f(X(t-))\right)^2\nu(\d u)$$
		\begin{equation*}\ltag{за лемою \ref{estimate_lemma}}
			\leq CX^{2(\beta - \alpha)}(t-) = C\tilde X^\frac{2(\beta - \alpha)}{1 - \alpha}(t-).
		\end{equation*}
		Якщо ж $X(t-) < 1$, то за формулою Тейлора
		$$\Big(f\big(X(t-) + c(X(t-))u\big) - f(X(t-))\Big)^2 = \big(f'(\xi_{X(t-), u})c(X(t-))u\big)^2 \leq Cu^2,$$
		тому $\int_\R \tilde c^2(t, u)\nu(\d u) \leq C$ (тут $\xi$ таке саме, як у лемі \ref{estimate_lemma}). Отже,
		$$\int_\R \tilde c^2(t, u)\nu(\d u) \leq C\left(1 + |\tilde X(t-)|^{2\tilde\beta}\right).$$
		
		Таким чином, коефіцієнти $\tilde a, \tilde b, \tilde c$ нового стохастичного диференціального рівняння задовольняють умови теореми \ref{theorem_asymptotics_constant}. Отже, $\tilde X(t) \sim At, \ t \to \infty,$ м.~н. Виконуючи відповідну заміну, отримуємо еквівалентність (\ref{solution_asymptotics}), що й треба було довести.
	\end{proof}

	\section{Додаток}
	
		\begin{proof}[Доведення леми \ref{convergence_lemma}]
		Нехай $x \geq 1$. З умови випливає, що $c^2(x) \leq Cx^{2\beta}$. Розіб'ємо інтеграл:
		$$\int_\R[...]\nu(\d u) = \int_{|u| < Kx^{1 - \beta}}[...]\nu(\d u) + \int_{|u| \geq Kx^{1 - \beta}}[...]\nu(\d u),$$
		де $[...] := f\left(x + c(x)u\right) - f(x) - f'(x)c(x)u$, $K > 0$ -- деяка константа.
		
		Нехай спочатку $|u| < Kx^{1 - \beta}$. За формулою Тейлора
		$$f\left(x + c(x)u\right) - f(x) - f'(x)c(x)u = \frac12f''(\xi_{x, u})c^2(x)u^2,$$
		де $\xi_{x, u} \in \left[x \land \left(x + c(x)u\right), x \lor \left(x + c(x)u\right)\right]$. Маємо
		$$\left|\xi_{x, u} - x\right| \leq \left|c(x)u\right|$$
		$$\Longrightarrow \left(\xi_{x, u} - x\right)^2 \leq \left(c(x)u\right)^2 = c^2(x)u^2 \leq Cx^{2\beta}u^2$$
		$$\Longrightarrow \left|\xi_{x, u} - x\right| \leq Cx^\beta |u|.$$
		Оберемо $K$ так, що
		$$Cx^\beta |u| \leq Cx^\beta Kx^{1 - \beta} = CKx \leq \frac12x, \qquad x \geq 1, \ |u| < Kx^{1 - \beta},$$
		тому $\frac12x \leq \xi_{x, u} \leq \frac32x.$ Отже,
		$$\int_{|u| < Kx^{1 - \beta}}[...]\nu(\d u) = \int_{|u| < Kx^{1 - \beta}}f''(\xi_{x, u})c^2(x)u^2\nu(\d u)$$
		$$\leq Cx^{2\beta}\int_{|u| < Kx^{1 - \beta}}\frac{u^2}{\xi_{x, u}^{\alpha + 1}}\nu(\d u) \leq Cx^{2\beta} \left(\frac12x\right)^{-(\alpha + 1)}\int_\R u^2 \nu(\d u)$$
		$$\leq \frac C{x^{1 + \alpha - 2\beta}} \to 0, \ x \to +\infty,$$
		бо $1 + \alpha - 2\beta > 0$, а $\int_\R u^2 \nu(\d u) < \infty$. 
		
		Оцінимо тепер інтеграл за множиною $\{u \in \R: |u| \geq Kx^{1 - \beta}\}$. Розіб'ємо інтеграл
		$$\int_{|u| \geq Kx^{1 - \beta}}[...]\nu(\d u) = \int_{|u| \geq Kx^{1 - \beta}} f\left(x + c(x)u\right)\nu(\d u)$$
		$$- f(x)\int_{|u| \geq Kx^{1 - \beta}} \nu(\d u) - f'(x)c(x)\int_{|u| \geq Kx^{1 - \beta}} u\nu(\d u)$$
		$$=: I_1(x) - I_2(x) - I_3(x)$$
		та оцінимо кожен доданок у правій частині. Помітимо, що $f(x) \leq \frac{|x|^{1 - \alpha}}{1 - \alpha}, x \in \R.$
		$$I_1(x) = \int_{|u| \geq Kx^{1 - \beta}}f\left(x + c(x)u\right)\nu(\d u) \leq \frac1{1 - \alpha}\int_{|u| \geq Kx^{1 - \beta}}1 \cdot \left|x + c(x)u\right|^{1 - \alpha}\nu(\d u)$$
		\begin{equation*}\ltag{за нерівністю Гьольдера}
			\leq \frac1{1 - \alpha}\left(\int_{|u| \geq Kx^{1 - \beta}}1^\frac2{1 + \alpha}\nu(\d u)\right)^\frac{1 + \alpha}2\left(\int_{|u| \geq Kx^{1 - \beta}}\left(\left|x + c(x)u\right|^{1 - \alpha}\right)^\frac2{1 - \alpha}\nu(\d u)\right)^\frac{1 - \alpha}2
		\end{equation*}
		$$= \frac1{1 - \alpha}\left(\int_{|u| \geq Kx^{1 - \beta}}\nu(\d u)\right)^\frac{1 + \alpha}2\left(\int_{|u| \geq Kx^{1 - \beta}}\left(x + c(x)u\right)^2\nu(\d u)\right)^\frac{1 - \alpha}2.$$
		Окремо оцінимо кожен з інтегралів:
		$$\int_{|u| \geq Kx^{1 - \beta}}\nu(\d u) = \int_{u^2 \geq K^2x^{2 - 2\beta}}\nu(\d u)$$
		\begin{equation*}\ltag{за нерівністю Чебишова}
			~
		\end{equation*}
		\begin{equation}\label{estimate1}
			\leq \frac1{K^2x^{2 - 2\beta}}\int_\R u^2\nu(\d u) \leq \frac C{x^{2 - 2\beta}};
		\end{equation}
		$$\int_{|u| \geq Kx^{1 - \beta}}\left(x + c(x)u\right)^2\nu(\d u)$$
		\begin{equation*}\ltag{за нерівністю Коші--Буняковського}
			\leq 2x^2\int_{|u| \geq Kx^{1 - \beta}}\nu(\d u) + 2c^2(x)\int_{|u| \geq Kx^{1 - \beta}}u^2\nu(\d u)
		\end{equation*}
		\begin{equation*}\ltag{за оцінкою (\ref{estimate1})}
			\leq 2x^2 \frac C{x^{2 - 2\beta}} + 2c^2(x)\int_{|u| \geq Kx^{1 - \beta}}u^2\nu(\d u)
		\end{equation*}
		\begin{equation}\label{estimate2}
			\leq Cx^{2\beta} + Cx^{2\beta} \leq Cx^{2\beta}.
		\end{equation}
		Таким чином, за оцінками (\ref{estimate1}) та (\ref{estimate2})
		$$I_1(x) \leq \frac1{1 - \alpha}\left(\frac C{x^{2 - 2\beta}}\right)^\frac{1 + \alpha}2\left(Cx^{2\beta}\right)^\frac{1 - \alpha}2 \leq \frac C{x^{1 + \alpha - 2\beta}} \to 0, \ x \to +\infty,$$
		бо $1 + \alpha - 2\beta > 0$.
		$$I_2(x) = f(x)\int_{|u| \geq Kx^{1 - \beta}} \nu(\d u) \leq \frac{x^{1 - \alpha}}{1 - \alpha}\int_{|u| \geq Kx^{1 - \beta}}\nu(\d u)$$
		\begin{equation*}\ltag{за оцінкою (\ref{estimate1})}
			\leq \frac{x^{1 - \alpha}}{1 - \alpha}\frac C{x^{2 - 2\beta}} \leq \frac C{x^{1 + \alpha - 2\beta}} \to 0, \ x \to +\infty,
		\end{equation*}
		бо $1 + \alpha - 2\beta > 0$.
		$$|I_3(x)| = \left|f'(x)c(x)\int_{|u| \geq Kx^{1 - \beta}} u\nu(\d u)\right| \leq \frac 1{x^\alpha}Cx^\beta\int_{|u| \geq Kx^{1 - \beta}}|u|\nu(\d u)$$
		$$= \frac C{x^{\alpha - \beta}}\int_{|u| \geq Kx^{1 - \beta}}|u|\nu(\d u).$$
		Окремо оцінимо інтеграл:
		$$\int_{|u| \geq Kx^{1 - \beta}}|u|\nu(\d u) = \int_{|u| \geq Kx^{1 - \beta}}1 \cdot |u|\nu(\d u).$$
		\begin{equation*}\ltag{за нерівністю Коші--Буняковського}
			\left(\int_{|u| \geq Kx^{1 - \beta}}1^2\nu(\d u)\right)^\frac12\left(\int_{|u| \geq Kx^{1 - \beta}}|u|^2\nu(\d u)\right)^\frac12
		\end{equation*}
		$$\leq \left(\int_{|u| \geq Kx^{1 - \beta}}\nu(\d u)\right)^\frac12\left(\int_\R u^2\nu(\d u)\right)^\frac12 \leq C\left(\int_{|u| \geq Kx^{1 - \beta}}\nu(\d u)\right)^\frac12$$
		\begin{equation*}\ltag{за оцінкою (\ref{estimate1})}
			\leq C\left(\frac C{x^{2 - 2\beta}}\right)^\frac12 \leq \frac C{x^{1 - \beta}}.
		\end{equation*}
		Отже,
		$$|I_3(x)| \leq \frac C{x^{\alpha - \beta}} \frac C{x^{1 - \beta}} \leq \frac C{x^{1 + \alpha - 2\beta}} \to 0, \ x \to +\infty,$$
		бо $1 + \alpha -2\beta > 0$.
		
		Усі три доданки $I_1(x), I_2(x), I_3(x)$ прямують до нуля при $x \to +\infty$, тому
		$$\int_{|u| \geq Kx^{1 - \beta}}[...]\nu(\d u) \to 0, \ x \to +\infty.$$
		Таким чином, лема доведена.
	\end{proof}

	\begin{proof}[Доведення леми \ref{estimate_lemma}]
		Нехай $x \geq 1$. З умови випливає, що $c^2(x) \leq Cx^{2\beta}, \ x \geq 1.$
		Розіб'ємо інтеграл:
		$$\int_\R[...]\nu(\d u) = \int_{|u| < Kx^{1 - \beta}}[...]\nu(\d u) + \int_{|u| \geq Kx^{1 - \beta}}[...]\nu(\d u),$$
		де $[...] := \left(f\left(x + c(x)u\right) - f(x)\right)^2$, $K > 0$ -- деяка константа.
		
		Нехай спочатку $|u| < Kx^{1 - \beta}$. За формулою Лагранжа
		$$f\left(x + c(x)u\right) - f(x) = f'(\xi_{x, u})c(x)u,$$
		де $\xi_{x, u} \in \left[x \land \left(x + c(x)u\right), x \lor \left(x + c(x)u\right)\right]$. Як і в доведенні леми \ref{convergence_lemma}, маємо $\frac12x \leq \xi_{x, u} \leq \frac32x,$ якщо обрати достатньо мале $K$. Тому
		$$\int_{|u| < Kx^{1 - \beta}}[...]\nu(\d u) = \int_{|u| < Kx^{1 - \beta}}\left(f'(\xi_{x, u})\right)^2c^2(x)u^2\nu(\d u)$$
		$$\leq Cx^{2\beta}\int_{|u| < Kx^{1 - \beta}}\frac{u^2}{\xi_{x, u}^{2\alpha}}\nu(\d u).$$
		Оскільки $\alpha > 0$, то
		$$\int_{|u| < Kx^{1 - \beta}}\frac{u^2}{\xi_{x, u}^{2\alpha}}\nu(\d u) \leq \left(\frac12 x\right)^{-2\alpha}\int_\R u^2 \nu(\d u) \leq Cx^{-2\alpha}.$$
		Таким чином,
		$$\int_{|u| < Kx^{1 - \beta}}[...]\nu(\d u) \leq Cx^{2\beta}Cx^{-2\alpha} \leq Cx^{2(\beta - \alpha)}.$$
		
		Нехай тепер $|u| \geq Kx^{1 - \beta}$. 
		Оцінимо інтеграл:
		$$\frac12\int_{|u| \geq Kx^{1 - \beta}}[...]\nu(\d u)$$
		\begin{equation}\ltag{за нерівністю Коші--Буняковського}
			\leq \int_{|u| \geq Kx^{1 - \beta}} f^2\left(x + c(x)u\right)\nu(\d u) + f^2(x)\int_{|u| \geq Kx^{1 - \beta}}\nu(\d u)
		\end{equation}
		$$=: J_1(x) + J_2(x).$$
		Оцінимо кожен доданок у правій частині, використовуючи оцінки, отримані при доведенні леми \ref{convergence_lemma}:
		
		$$J_1(x) = \int_{|u| \geq Kx^{1 - \beta}} f^2\left(x + c(x)u\right)\nu(\d u)$$
		$$\leq \frac1{(1 - \alpha)^2}\int_{|u| \geq Kx^{1 - \beta}} 1 \cdot |x + c(x)u|^{2(1 - \alpha)}\nu(\d u)$$
		\begin{equation*}\ltag{за нерівністю Гьольдера}
			\leq \frac1{(1 - \alpha)^2} \left(\int_{|u| \geq Kx^{1 - \beta}}1^\frac1\alpha\nu(\d u)\right)^\alpha\left(\int_{|u| \geq Kx^{1 - \beta}}\left(|x + c(x)u|^{2(1 - \alpha)}\right)^\frac1{1 - \alpha}\nu(\d u)\right)^{1 - \alpha}
		\end{equation*}
		$$= \frac1{(1 - \alpha)^2} \left(\int_{|u| \geq Kx^{1 - \beta}}\nu(\d u)\right)^\alpha\left(\int_{|u| \geq Kx^{1 - \beta}}\left(x + c(x)u\right)^2\nu(\d u)\right)^{1 - \alpha}$$
		\begin{equation*}\ltag{за оцінками (\ref{estimate1}) та (\ref{estimate2})}
			\leq \frac1{(1 - \alpha)^2} \left(\frac C{x^{2 - 2\beta}}\right)^\alpha\left(Cx^{2\beta}\right)^{1 - \alpha} \leq Cx^{2(\beta - \alpha)};
		\end{equation*}
		$$J_2(x) = f^2(x)\int_{|u| \geq Kx^{1 - \beta}} \nu(\d u) \leq \left(\frac{x^{1 - \alpha}}{1 - \alpha}\right)^2 \int_{|u| \geq Kx^{1 - \beta}} \nu(\d u)$$
		\begin{equation*}\ltag{за оцінкою (\ref{estimate1})}
			\leq \left(\frac{x^{1 - \alpha}}{1 - \alpha}\right)^2 \frac C{x^{2 - 2\beta}} \leq Cx^{2(\beta - \alpha)}.
		\end{equation*}
		
		Обидва доданки оцінюються як $Cx^{2(\beta - \alpha)}$, тому
		$$\int_{|u| \geq Kx^{1 - \beta}}[...]\nu(\d u) \leq Cx^{2(\beta - \alpha)}.$$
		Таким чином, лема доведена.
	\end{proof}
	
	\bibliographystyle{plainnat}
	\bibliography{article}
	
\end{document}